\newcommand{\oeis}[1]{\href{http://oeis.org/#1}{#1}}
\newcommand{\addorcid}[1]{\href{https://orcid.org/#1}{\protect\includegraphics[height=3mm]{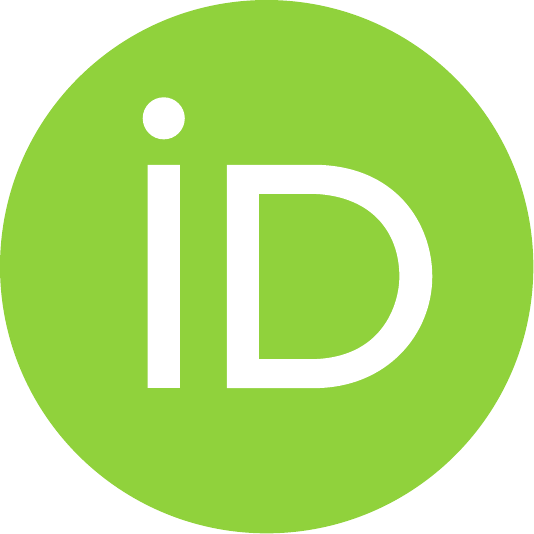}} }
\definecolor{catred}{RGB}{237, 41, 57}
\definecolor{lightblue}{HTML}{A5D8FF}
\newtheorem{theorem}{Theorem}
\newtheorem{lemma}[theorem]{Lemma}
\newtheorem{corollary}[theorem]{Corollary}
\newtheorem{definition}[theorem]{Definition}
\theoremstyle{definition}
\newtheorem{remark}[theorem]{Remark}
\newcommand{\Sc}{\mathcal{S}}
\newcommand{\Z}{\mathbb{Z}}
\newcommand{\ECat}{{\color{catred}\textbf{E}}}
\title{A Bijection Between Stacked Directed Polyominoes \\ and Motzkin Paths with Catastrophes}
\author{
Florian Schager\addorcid{0009-0009-3923-051X}
\and 
Michael Wallner\addorcid{0000-0001-8581-449X} }
\begin{document}
\maketitle

\begin{abstract}
We present a novel bijection between stacked directed polyominoes and Motzkin paths with catastrophes. Further, we leverage this new bridge between these two worlds to obtain a better understanding of certain parameters of stacked directed animals. In particular, we obtain improved lower and upper bounds on the asymptotic width of stacked directed animals.

\medskip

\noindent\textbf{Keywords: } Lattice Paths, Polyominoes, Bijection, Analytic Combinatorics
\end{abstract}

\tableofcontents

\pagebreak

\section{Introduction}

In this paper we show a bijection between a subclass of polyominoes called \textit{stacked directed animals} and a class of Motzkin paths augmented with a model of catastrophes. The motivation behind the enumeration of lattice animals or polyominoes can be found in the study of branched polymers \cite{Polymers} and percolation \cite{Percolation}. 
Even though these combinatorial objects have been studied for more than 40 years, exact enumeration results for general polyominoes are still rare. 
Thus, one of the main research directions focuses on the investigation of large subclasses of polyominoes that are exactly enumerable. 
This is also the motivating force behind the introduction of the class of stacked directed animals.

\subsection{Lattice animals}

\begin{definition}[Lattice animals]\label{def:lattice_animals}
  A \emph{polyomino} of area $n$ is a connected union of $n$ cells on a lattice. 
  The corresponding \emph{lattice animal} then lives on the dual lattice obtained by taking the center of each cell.
\end{definition}

Stacked directed animals were first introduced in a paper by Bousquet-Mélou and Rechnitzer in 2002 \cite{LatticeAnimals} as a superset of the well-studied class of \textit{directed animals}.
Informally speaking, stacked directed animals can be seen as a sequence of directed animals.

\begin{definition}[Directed animals]
\label{def:directed_animal}
  A \emph{directed animal} on the square grid is a lattice animal, where one vertex has been designated the \emph{source} and all other vertices are connected to the source via a directed path consisting only of north (\textbf{N}) and east (\textbf{E}) steps, and visiting only vertices belonging to the animal.
\end{definition}

The enumeration of directed animals is already well mapped out. 
The first exact enumeration results are due to Dhar \cite{Dhar82, Dhar83} who showed that the number of directed animals of size $n$ grows asymptotically as $3^n$.
The derivation of the generating function for directed animals has been greatly simplified by the introduction of a key auxiliary concept.

\begin{definition}[Heaps of dimers]
  A \emph{dimer} consists of two adjacent vertices on a lattice. 
  A \emph{heap of dimers} is obtained by dropping a finite number of dimers towards a horizontal axis, where each dimer falls until it either touches the horizontal axis or another dimer; see Figure~\ref{fig:heaps}. 
  The \emph{width} of a heap is the number of non-empty columns. 
  The dimers that touch the $x$-axis are called \emph{minimal}.
  A heap is called
  \begin{itemize}
    \item \emph{strict}, if no dimer has another dimer directly above it;
    \item \emph{connected}, if its orthogonal projection on the horizontal axis is connected;
    \item a \emph{pyramid}, if it has only one minimal dimer;
    \item a \emph{half-pyramid}, if its only minimal dimer lies in the rightmost non-empty column.
  \end{itemize}
  The \emph{right/left width} of a pyramid is the number of non-empty columns to the right/left of the minimal dimer.
\end{definition}

This combinatorial structure was first introduced by Viennot in 1985 \cite{Heaps}.
In addition to simplifying the derivation of the generating functions of directed animals, they also serve as an intermediary step for the bijection we present in this paper.

\begin{figure}[hbt!]
    \newcommand{\mywidth}{0.2 \textwidth}
    \newcommand{\myheight}{2.8cm}
  \centering
  \begin{subfigure}{\mywidth}
    \centering
    \includegraphics[height=\myheight]{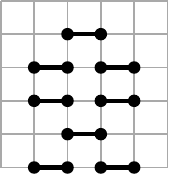}
    \caption{A general heap.}
  \end{subfigure}
  \hfill
  \begin{subfigure}{\mywidth}
    \centering
    \includegraphics[height=\myheight]{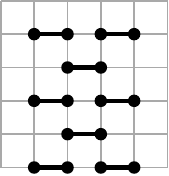}
    \caption{A strict heap.}
  \end{subfigure}
  \hfill
  \begin{subfigure}{\mywidth}
    \centering
    \includegraphics[height=\myheight]{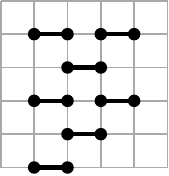}
    \caption{A pyramid.}
  \end{subfigure}
  \hfill
  \begin{subfigure}{\mywidth}
    \centering
    \includegraphics[height=\myheight]{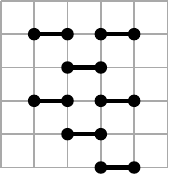}
    \caption{A half-pyramid.}
  \end{subfigure}
  \caption{Different types of heaps of dimers.}
  \label{fig:heaps}
\end{figure}

\subsection{Lattice paths with catastrophes}

On the other side of our bijection we have lattice paths.

\begin{definition}[Lattice paths]
Let $\Sc \subseteq \Z$ be a finite set of integers called \emph{steps}. 
A \emph{lattice path} is a sequence $(s_1,s_2,\dots,s_n) \in \Sc^n$ of steps with a fixed starting point $y_0$.\footnote{In the literature this model is called simple and directed.}
We set $y_0=0$ and define $y_k = \sum_{i=1}^k s_i$ as the \emph{altitude} of the path after $k$ steps.
Furthermore, we distinguish different classes of paths:
\begin{itemize}
    \item An unconstrained path is called a \emph{walk}.
    \item A walk ending on the $x$-axis (i.e., $y_n=0$) is called a \emph{bridge}.
    \item A walk that may never cross the $x$-axis (i.e., $y_k \geq 0$) is called a \emph{meander}.
    \item A walk that is at the same time a bridge and a meander is called an \emph{excursion}.
\end{itemize}    
\end{definition}

\emph{Dyck paths} are probably the most ubiquitous class of paths, which are excursions associated with the steps $\Sc = \{-1,1\}$ and famously enumerated by the Catalan numbers.
In our bijections we will encounter the nearly equally well-known \emph{Motzkin paths}, which are excursions associated with $\Sc = \{-1,0,1\}$. In writing we will identify the steps with their cardinal directions (\textbf{SE}, \textbf{E}, \textbf{NE}) when drawn on the cartesian plane.
We will call them Motzkin excursions (resp.\ Motzkin meanders), when they use the step set of Motzkin paths and are excursions (resp.\ meanders).
For a simple step set $\mathcal{S}$ we define the \textit{characteristic polynomial} as $P(u) = \sum_{s \in \mathcal{S}} u^s$.
To derive generating functions for various classes of lattice paths, one often looks at solutions of the \textit{kernel equation} $K(z,u) := 1 - z \cdot P(u) = 0$. When $c = - \min(\mathcal{S})$ and $d = \max(\mathcal{S})$, the solutions can be divided into $c$ \textit{small roots} $u_1(z), \dots, u_c(z)$ with $u_i(z) \xrightarrow[z \to 0]{} 0$ and $d$ \textit{large roots} $v_1(z), \dots, v_d(z)$ with $v_j(z) \xrightarrow[z \to 0]{} \infty$.
Then, the following classic result from Banderier and Flajolet gives an explicit expression for the generating function of lattice paths with arbitrary simple step sets.

\begin{theorem}[Generating function of meanders and excursions {\cite[Theorem 2]{BasicLatticePaths}}]
\label{thm:gf_meanders_excursions}
  The bivariate generating function of meanders ($z$ marking size and $u$ marking final altitude) relative to a simple step set $\mathcal{S}$ with characteristic polynomial $P(u)$, is an algebraic function. It is given by
  \begin{equation} \label{eq:gf_meanders}
    M(z,u) = \frac{\prod_{j=1}^c (u - u_j(z))}{u^c(1-zP(u))} = -\frac{1}{p_dz} \prod_{\ell = 1}^d \frac{1}{u - v_\ell(z)}.
  \end{equation}
  In particular, the generating function of excursions, $E(z) = M(z,0)$ satisfies
  \begin{equation} \label{eq:gf_excursions}
    E(z) = \frac{(-1)^{c-1}}{p_{-c}z}
    \prod_{j=1}^c u_j(z) = \frac{(-1)^{d-1}}{p_dz} \prod_{\ell=1}^d \frac{1}{v_\ell(z)}.
  \end{equation}
\end{theorem}

We apply the theorem to get an explicit expression for the generating functions of Motzkin meanders and excursions, which we will need later on in the paper.

\begin{corollary}[Motzkin meanders] \label{cor:motzkin_meanders}
  The bivariate generating function $M_{\mathcal{M}}(z,u)$ for Motzkin meanders satisfies 
  $$
    M_{\mathcal{M}}(z,u) = \frac{2z(u + 1) - 1 + \sqrt{1 - 2z - 3 z^{2}}}{2z\left(u - z \left(u^{2}+u + 1\right)\right)}.
  $$
  Further, the generating function $M_{\mathcal{M}}(z, 1)$ of meanders ending at any altitude satisfies
  \begin{align*}
    M_{\mathcal{M}}(z, 1) &= \frac{1 - 3z - \sqrt{1 - 2z - 3z^2}}{6z^2 - 2z} \\
    &= 1 + 2z + 5z^{2} + 13z^{3} + 35z^{3} + 96z^{5} + 267z^{6} + 750z^7 + \mathcal{O}(z^8).
  \end{align*}
  This counting sequence corresponds to \href{https://oeis.org/A005773}{\texttt{OEIS A005773}}, which tells us that it also counts the number of directed animals of size $n$. We will make this connection explicit in \Cref{sec:bijection}.
\end{corollary}

\begin{proof}
  Solving the kernel equation
  $$
    1 - z(1 + u + u^2) = 0
  $$
  for the Motzkin family of directed lattice paths yields the unique small branch
  $$
    u_1(z) = \frac{1 - z - \sqrt{1 - 2z - 3z^2}}{2z}.
  $$
  Now, all that remains is to plug $u_1(z)$ into Equation \eqref{eq:gf_meanders} and we obtain
  $$
    M_{\mathcal{M}}(z,u) = \frac{u - u_1(z)}{u(1 - zP(u))} = \frac{2z(u + 1) - 1 + \sqrt{1 - 2z - 3 z^{2}}}{2z\left(u - z \left(u^{2}+u + 1\right)\right)}.
  $$
  Setting $u = 1$ then yields
  \begin{equation*}
    M_{\mathcal{M}}(z,1) = \frac{1 - 3z - \sqrt{1 - 2z - 3z^2}}{6z^2 - 2z}. \qedhere
  \end{equation*}
\end{proof}

\begin{corollary}[Motzkin excursions]
  \label{cor:motzkin_excursions}
  The generating function $E_{\mathcal{M}}(z,1)$ for Motzkin excursions satisfies 
  \begin{align*}
    E_{\mathcal{M}}(z,1) &= \frac{u_1(z)}{z} = \frac{1 - z - \sqrt{1 - 2z - 3z^2}}{2z^2} \\
    &= 1 + z + 2z^{2} + 4z^{3} + 9z^{4} + 21z^{5} + 51z^{6} + 127z^{7} + \mathcal{O}(z^{8})
  \end{align*}
  and its coefficients are known as the \textit{Motzkin numbers}; see \href{https://oeis.org/A001006}{\texttt{OEIS A001006}}
\end{corollary}
    
We will now enrich these models by a new type of step that takes a path immediately down to the $x$-axis; see \Cref{fig:catastrophes_example}

\begin{definition}[Catastrophe]
  \label{def:catastrophe} 
  For $s \geq 0$, a \emph{catastrophe} is a step $-s$ that is only allowed to happen at altitude~$s$.
\end{definition}

Dyck meanders with catastrophes were first introduced in 2005 by Krinik et al.~\cite{QueueingTheory} as a model for the classical single server queueing system M/M/1/H with a finite capacity, with a constant catastrophe rate $\gamma$. 
In addition, catastrophe queues also arise as simple, natural models of the evolution of stock markets \cite{Schoutens}, or under the name of \emph{random walks with resetting} in the field of probability theory and statistical mechanics \cite{RandomWalksResetting}.
Later, Banderier and Wallner~\cite{Catastrophes} studied enumerative properties and derived limit laws for several parameters, such as the total number of catastrophes.
In their paper, they actually use a slightly different model of catastrophes, where a catastrophe is not allowed to coincide with a regular down step.
Removing this restriction leads to a model that is easier to handle and simplifies the derivation of generating functions as well as the study of parameters.
Furthermore, it is a necessary adaptation in order for our bijection to go through.

\begin{figure}[hbt!]
    \centering
    \includegraphics[height=3cm]{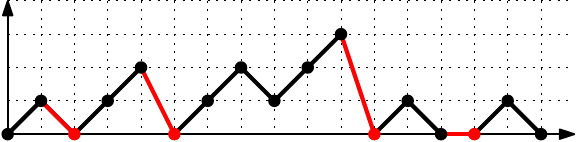}
    \caption{Example of a Dyck excursion with catastrophes marked in red. 
    Note that a step from altitude $1$ to $0$ can either be a catastrophe (marked in red) or a step $-1$ from the step set $\mathcal{S} = \{ -1, 1 \} $. }
    \label{fig:catastrophes_example}
\end{figure}

\subsection{Construction of stacked directed animals}
\label{subsec:construction}

Now we will describe a construction from \cite[p.~240]{LatticeAnimals} that maps directed animals on the square lattice to strict pyramids of dimers.

\begin{definition}[Mapping from directed animals to heaps]
  Let $\mathcal{D}$ denote the set of directed lattice animals on the square grid, $\mathcal{P}$ denote the set of strict pyramids and $D \in \mathcal{D}$. 
  We define a mapping $V \colon \mathcal{D} \to \mathcal{P}$ as follows:
  \begin{enumerate}
    \item Rotate $D$ by $45^\circ$ degrees counter-clockwise.
    \item Replace each individual cell in $D$ by a dimer.
  \end{enumerate}
  This results in a pyramid that we call $V(D)$, with the source of the lattice animal being the only minimal dimer.
\end{definition}

\begin{remark} \label{remark:bijection}
  It was observed by Viennot in \cite{Heaps} that this mapping induces a bijection between directed animals on the square lattice and strict pyramids of dimers and we denote the inverse mapping by $\overline{V}$.
  This can be easily verified by recalling that any vertex in $D$ lies on a directed path consisting only of \textbf{N} and \textbf{E} steps from the source, visiting only other vertices in $D$. 
  Hence, the corresponding dimer in $V(D)$ lies on a directed path of dimers lying diagonally to the left or the right above each other. 
  As the next definition will show, it only takes a small adaptation to extend this mapping to general lattice animals.
\end{remark}

\begin{figure}[hbt!]
  \centering
  \includegraphics[height=3.6cm]{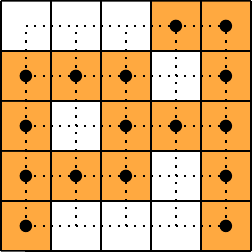}
  \qquad \qquad 
    \includegraphics[height=4.5cm]{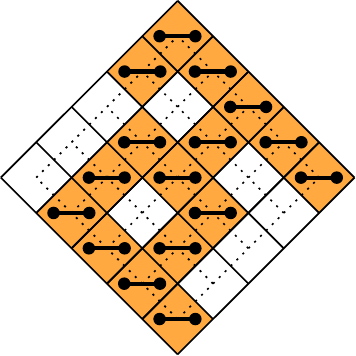}
  \qquad \qquad 
    \includegraphics[height=3.6cm]{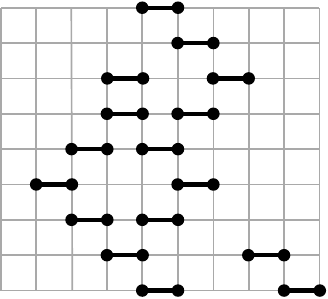}
  \caption[Constructing a connected heap from a square lattice animal.]{Constructing the connected heap $V(A)$ from an animal $A$ on the square grid.}
  \label{fig:connected_heap}
\end{figure}

\begin{definition}[Mapping from lattice animals to heaps]
  Let $\mathcal{A}$ denote the set of lattice animals on the square lattice, $\mathcal{H}$ the set of connected heaps, and
  $A \in \mathcal{A}$.
  We define a mapping $V: \mathcal{A} \to \mathcal{H}$ as follows:
  \begin{enumerate}
    \item Rotate $A$ by $45^\circ$ degrees counter-clockwise.
    \item Replace each individual cell in $A$ by a dimer.
    \item Let the dimers fall.
  \end{enumerate}
  We call the resulting heap $V(A)$; see Figure \ref{fig:connected_heap} for an example of this procedure.
\end{definition}

\smallskip

Thus, $V$ maps square lattice animals to connected heaps that are not necessarily strict. However, clearly not every connected heap can be obtained in this way. Hence, we restrict our attention to strict, connected heaps and define a class of lattice animals that stand in one-to-one correspondence with strict, connected heaps via $V$.

\smallskip

\begin{definition}[Multi-directed animals]
\label{def:multi_directed_animals}
  Let $H$ be a strict, connected heap. 
  We now construct an extension of $\overline{V}$ to connected heaps via induction over the number of minimal dimers $k$ of $H$:
  \begin{itemize}
    \item For $k = 1$, the heap $H$ reduces to a simple pyramid. Thus, by Remark~\ref{remark:bijection}, $\overline{V}(H)$ is already well-defined.
    \item If instead $H$ has $k > 1$ minimal dimers, we push the $(k-1)$ leftmost pyramids upwards, producing a connected heap $H'$ with $k-1$ minimal dimers, placed far above the remaining pyramid $P_k$. 
    Now, recursively replace $H'$ by $\overline{V}(H')$ and $P_k$ by $\overline{V}(P_k)$.
    \item Finalize the construction by pushing $\overline{V}(H')$ downwards until it connects to $\overline{V}(P_k)$.
  \end{itemize}
  We define $\overline{V}(H)$ as the resulting animal and call the class of square lattice animals obtainable in this way \emph{square multi-directed animals}.
\end{definition}

\begin{figure}[t]
    \centering
    \includegraphics{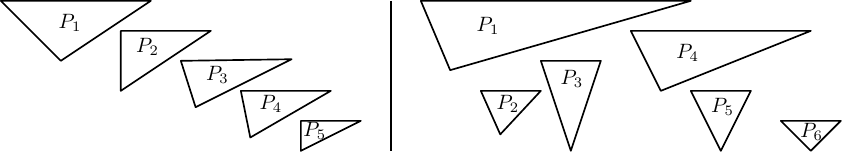}
    \caption{The schematic structure of stacked directed animals (left) and multi-directed animals (right) from Definitions~\ref{def:stacked_directed_animals} and \ref{def:multi_directed_animals}, respectively \cite[Figure~9]{LatticeAnimals}. Each triangle represents a directed animal from Definition~\ref{def:directed_animal}. }
    \label{fig:multi_directed}
\end{figure}

\smallskip

Now we are ready to define stacked directed animals as a subclass of multi-directed animals.

\begin{definition}[Stacked directed animals]
\label{def:stacked_directed_animals}
  Take a connected heap $H$ with $k$ minimal dimers. Let us denote by $P_1,P_2,\dots,P_k$, from left to right, the corresponding \emph{pyramidal factors} of $H$ from the construction in Definition~\ref{def:multi_directed_animals}. 
  Let us call \emph{stacked pyramids} the connected heaps such that for $2 \leq i \leq k$, the horizontal projection of $P_i$ intersects the horizontal projection of $P_{i-1}$. 
  Then, \emph{stacked directed animals} are defined as the image of the set of stacked pyramids under $\overline{V}$.
  The \emph{right width} of a stacked pyramid is the right width of its rightmost pyramidal factor. 
\end{definition}

\smallskip

These lattice animals are easier to enumerate due to their recursive description, visualized in Figure~\ref{fig:multi_directed}.
This description yields algebraic equations for their generating functions.
It will also prove crucial in constructing our correspondence to Motzkin excursions with catastrophes that we introduce in the next section.

\subsection{Our contributions}

As our main result, we construct the following bijection:

\begin{restatable}{theorem}{Bijection} 
  \label{thm:bijection}
  The set of Motzkin excursions with catastrophes of length~$n$ is in bijection with the set of stacked directed animals of size $n + 1$ on the square grid.
\end{restatable}

For an visual proof, we refer to \Cref{fig:stacked_pyramids}, which shows that the two combinatorial classes admit equivalent structural decompositions. 
These decompositions then directly yield a procedure to transform a stacked directed animal into a Motzkin excursion with catastrophes and vice versa:
Starting with an arbitrary stacked directed animal, one first notes down the recursive decomposition of the animal, translates the bases cases into the corresponding lattice paths and traces the steps of the decomposition back on the equivalent lattice path side in reverse order.

As an application of our bijection, we give improved upper and lower bounds on the asymptotic width of stacked directed animals. The best previously known lower bound was $\frac{3}{28} n$ \cite[Proposition 2]{LatticeAnimals}, while on the upper bound side no non-trivial bound was known beforehand.

\begin{restatable}{theorem}{Width}
    \label{thm:width}
    We define the random variable $W_n$ as the width of a stacked pyramid of size $n$ drawn uniformly at random. Then, $ \lim_{ n \to \infty } \mathbb{E}[W_n] \in \left[ \frac{9}{28n}, \frac{6n}{7}\right]$.
\end{restatable}

The remainder of the paper is organized as follows: In \Cref{sec:generating_functions} we show that both classes are described by the same generating function, thus proving that they are equinumerous. Then, in \Cref{sec:bijection} we incrementally build up to the final bijection. As a first stepping stone we prove that half-pyramids are in bijection with Motzkin excursions. Next, we move onto pyramids, which correspond to Motzkin excursions with catastrophes, but only at height $h = 0$. Finally, using the previous two bijections as core building block we can prove our main result, \Cref{thm:bijection}.
In \Cref{sec:parameters} we study the average width of a stacked pyramids. Using our bijection, we can translate results on the distribution of certain statistics of lattice paths to the polyomino setting, thus proving \Cref{thm:width}.
The cumulative size of all catastrophes on the lattice path side gives the improved lower bound of $\frac{9}{28} n$ on the average width of a stacked pyramid.
On the other hand, we show that counting all steps except the \textbf{D} steps gives an upper bound on the width of stacked pyramids of size $n$ of $\frac{6n}{7}$.

\begin{figure}[hbt!]
  \centering
  \includegraphics[width = \linewidth]{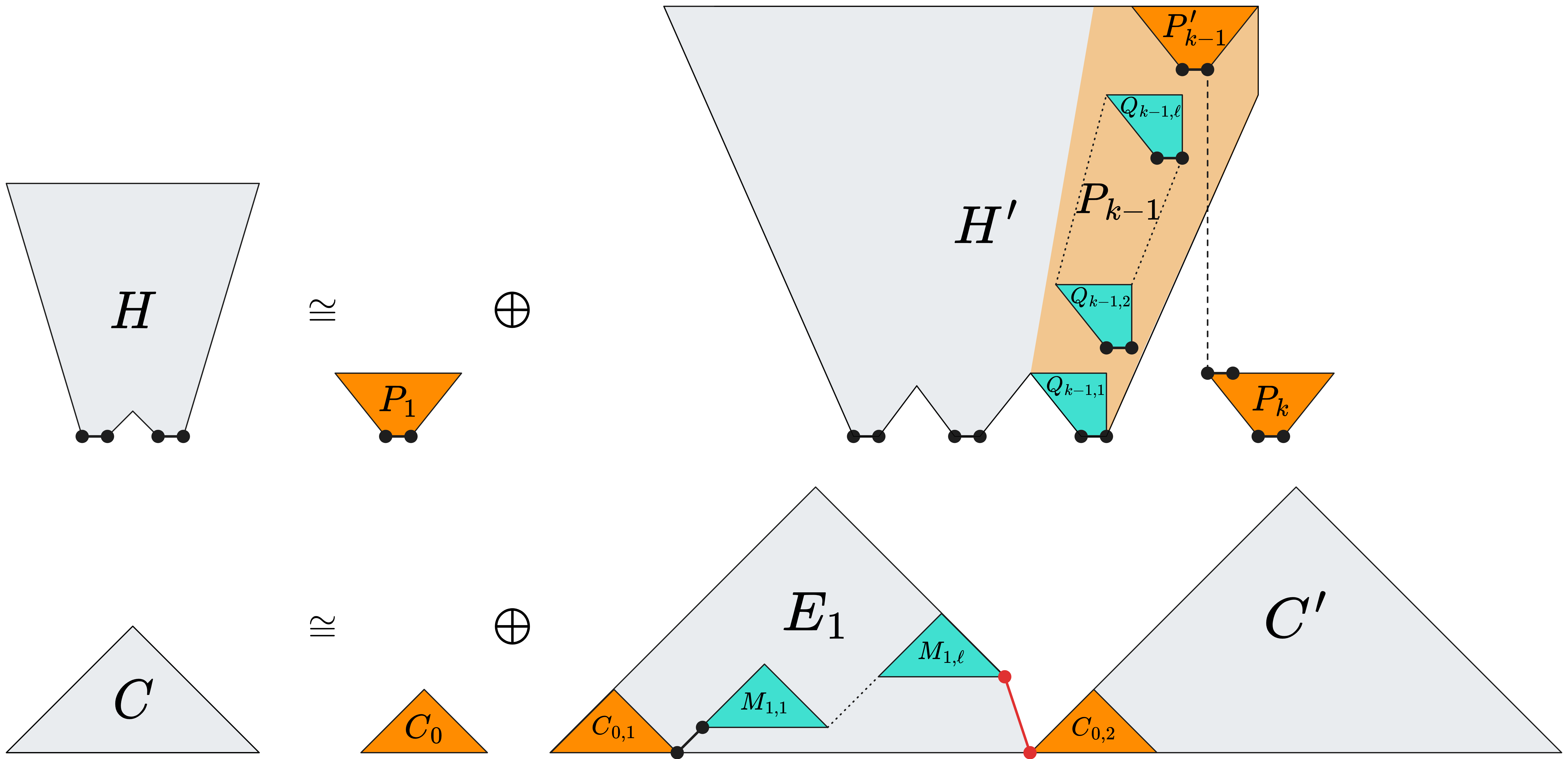}
  \caption[Recursive construction of stacked pyramids.]{The recursive constructions of stacked pyramids and Motzkin excursions with catastrophes.}
  \label{fig:stacked_pyramids}
\end{figure}

\section{Equality of generating functions}
\label{sec:generating_functions}

In this section we show that the class of Motzkin excursion with catastrophes and the class of stacked directed animals are described by the same generating function and are thus equinumerous.

\begin{theorem}[Generating function for meanders and excursions with catastrophes {\cite[Theorem 2.1]{Catastrophes}}] \label{thm:gf_catastrophes}
  Let $c_{n,k}$ be the number of meanders with catastrophes of length $n$ ending at altitude $k$, relative to a simple step set $\mathcal{S}$, with characteristic polynomial $P(u) = \sum_{k=-c}^d p_k u^k$.
  Further, let $u_1,\dots,u_c$ denote the small roots and $v_1,\dots,v_d$ the large roots of the kernel equation.
  Then the generating function 
  $$
  C(z,u) = \sum_{n,k = 0}^\infty c_{n,k} u^k z^n
  $$ 
  is algebraic and satisfies 
  \begin{equation*}
    C(z,u) = D(z) \cdot M(z,u) = \frac{1}{1 - Q(z)} \cdot \frac{\prod_{i=1}^c(u - u_i(z))}{u^c(1 - zP(u))},
  \end{equation*}
  where $D(z)$ denotes the generating function of excursions ending with a catastrophe and $Q(z) = z \cdot M(z, 1)$ counts the number of excursions with exactly one catastrophe occurring as the last step of the path. In particular, the generating function $E(z)$ for excursions with catastrophes satisfies
  \[
    E(z) = D(z) \cdot M(z,0) = \frac{(-1)^{c-1}}{1 - Q(z)} \cdot \frac{\prod_{i=1}^c u_i(z)}{z p_{-c}}
  \]
\end{theorem}

We recall Motzkin walks to be directed lattice paths with the simple step set $\mathcal{M} = \{-1,0,1\}$.

\begin{corollary} \label{ex:motzkin_excursions}
  The generating function $E_\mathcal{M}(z)$ of Motzkin excursions with catastrophes satisfies 
  $$
    E_\mathcal{M}(z) = D(z) \cdot M_{\mathcal{M}}(z, 0) = \frac{E(z)}{1 - Q(z)} = \frac{u_1(z)}{z\left(1 - z\frac{1 - u_1(z)}{1 - 3z}\right)} = \frac{u_{1}(z)(1 - 3z)}{z(1 + (u_{1}(z) - 4)z)}.
  $$
  Extracting the first few coefficients yields 
  $$ 
    E_\mathcal{M}(z) = 1 + 2z + 6z^{2} + 19z^{3} + 63z^{4} + 213z^{5} + 729z^{6} + 2513z^{7} + \mathcal{O}(z^{8}).
  $$
  This sequence corresponds to \href{https://oeis.org/A059712}{\texttt{OEIS A059712}}.
\end{corollary}

In \cite{LatticeAnimals}, Bousquet-Mélou and Rechnitzer derived the following generating functions for directed animals.

\begin{theorem}[Generating functions of directed animals {\cite[Proposition 1]{LatticeAnimals}}]\label{thm:gf_directed_animals}
  The generating function $Q(z)$ for strict half-pyramids is given by
  \[
    Q(z) = \frac{1 - z - \sqrt{(1+z)(1-3z)}}{2z}.
  \]
  The generating function for strict pyramids, with $z$ counting their number of dimers and $u$ counting their right width is
  \begin{equation} \label{eq:gf_pyramids}
    P(z,u) = \frac{Q(z)}{1 - uQ(z)}.
  \end{equation}
  In particular, the generating function $P(z,1)$ for directed animals on the square lattice is given by
  \[
    P(z,1) = \frac{1}{2}\left(\sqrt{\frac{1+z}{1-3z}}-1\right).
  \]
\end{theorem}

\begin{theorem}[Generating functions of stacked directed animals {\cite[Proposition 2]{LatticeAnimals}}]
  Let $Q(z)$ denote the generating function for strict half-pyramids. Let $P(z,u)$ denote the bivariate generating function for strict pyramids, with $u$ counting the right width of the pyramid. Then, the generating function for stacked directed animals on the square lattice with $z$ enumerating the number of dimers, $u$ the right width and $t$ the number of minimal dimers, is given by
  \[
    S(z,u,t) = \frac{t P(z,u)}{1 - t P(z,1)^2} 
    = \frac{t Q(1 - Q)^2}{(1 - uQ)((1- Q)^2 - tQ^2)}.
  \]
  In particular, the generating function for stacked directed animals on the square lattice, is given by 
  \[
    S(z) = \frac{(1-z)(1-3z) - (1-4z)\sqrt{(1-3z)(1+z)}}{2z(2 - 7z)}.
  \]
\end{theorem}

\begin{theorem}
  The generating function of stacked directed animals of size $n+1$ on the square lattice coincides with the generating function of Motzkin paths with catastrophes of length $n$.
\end{theorem}

\begin{proof}
  Let $E_{\mathcal{M}}(z)$ be the generating function of Motzkin excursions and $Q(z)$ be the generating function of strict half-pyramids.
  Then, comparing Theorem \ref{thm:gf_directed_animals} to \Cref{cor:motzkin_excursions} shows that 
  \begin{equation}\label{eq:Qs}
    Q(z) = z \cdot E_\mathcal{M}(z).
  \end{equation}
  Furthermore, for the bivariate generating function of strict pyramids $P(z,u)$, with $u$ marking the right width of the pyramid we have 
  $$
    P(z,u) = \frac{Q(z)}{1-uQ(z)} = \frac{z E_\mathcal{M}(z)}{1-uzE_\mathcal{M}(z)}.
  $$
  This generating function also has a lattice path interpretation.
  Let $\omega$ be a Motzkin excursion, with catastrophes only at altitude zero and let $u$ count the number of catastrophes in~$\omega$. We cut the path $\omega$ directly before each catastrophe. This partitions $\omega$ into a single Motzkin excursion without catastrophes, counted by $E_\mathcal{M}(z)$, followed by a possibly empty sequence of Motzkin excursions without catastrophes, each preceded by a catastrophe, counted by $u z \cdot E_\mathcal{M}(z)$.
  Hence, their generating function $F(z,u)$ satisfies
  \begin{equation}\label{eq:Ps}
    F(z,u) = \frac{E_\mathcal{M}(z)}{1-uzE_\mathcal{M}(z)} = \frac{P(z,u)}{z}.
  \end{equation}
  Further, the generating function for stacked directed animals reads 
  \begin{equation*}\label{eq:S}
    S(z,1,1) = \frac{P(z,1)}{1-P(z,1)^{2}} = \frac{\frac{Q(z)}{1 - Q(z)}}{1 - \frac{Q(z)^2}{(1-Q(z))^2}} = \frac{Q_s(z)}{1 - Q(z) - \frac{Q(z)^2}{1 - Q(z)}}
    = \frac{Q(z)}{1 - \frac{Q(z)}{1 - Q(z)}}.
  \end{equation*}
  Next, we observe that the generating function of Motzkin meanders satisfies
  \begin{equation}\label{eq:motzkin_meanders_animals}
    M_\mathcal{M}(z) = \frac{E_\mathcal{M}(z)}{1-zE_\mathcal{M}(z)}.
  \end{equation}
  To wit, consider a last passage decomposition of a Motzkin meander $\omega$. This splits $\omega$ into an initial excursion, counted by $E_{\mathcal{M}}(z)$, followed by a sequence of paths going from altitude $i$ to altitude $i + 1$, while staying above the line $y = i$, counted by $z E_\mathcal{M}(z)$.
  Finally, combining \eqref{eq:S}, \eqref{eq:Qs} and \eqref{eq:motzkin_meanders_animals}, we obtain
  \begin{equation*}
    S(z,1,1) = \frac{Q_s(z)}{1 - \frac{Q_s(z)}{1 - Q_s(z)}} = \frac{zE_\mathcal{M}(z)}{1 - z\frac{E_\mathcal{M}(z)}{1-zE_\mathcal{M}(z)}} = \frac{zE_\mathcal{M}(z)}{1 - zM_\mathcal{M}(z)} = zM_\mathcal{M}(z). \qedhere
  \end{equation*}
\end{proof}

\section{Bijective interpretation}
\label{sec:bijection}

\begin{figure}[hbt!]
  \centering
  \begin{subfigure}[c]{0.4 \textwidth}
    \centering
    \includegraphics[height=3cm]{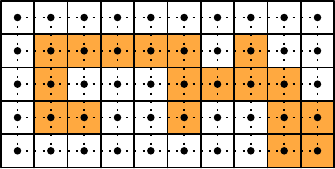}
    \caption{Stacked directed animal of size 18.}
  \end{subfigure}
  \begin{subfigure}[c]{0.59 \textwidth}
    \centering
    \includegraphics[height=5cm]{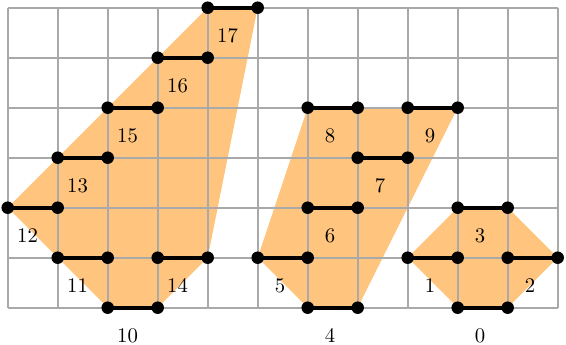}
    \caption{Corresponding stacked pyramids of size 18.}
  \end{subfigure}
  \vskip\baselineskip
  \begin{subfigure}{\textwidth}
    \centering
    \includegraphics{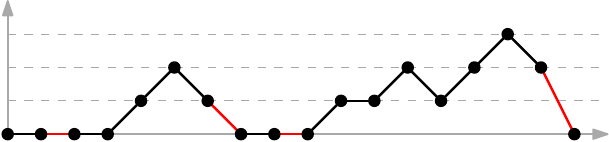}
    \caption{Motzkin excursion with catastrophes (marked in red) of length 17.}
  \end{subfigure}
  \caption[Bijection involving stacked directed animals.]{
  A stacked directed animal and its corresponding Motzkin excursion with catastrophes. 
  The dimers are numbered according to the order of their corresponding steps in the lattice path.}
  \label{fig:stacked_directed_animals_example}
\end{figure}

We are now ready to link Motzkin excursions with catastrophes to stacked directed animals.
The following bijection is introduced incrementally in three steps, starting with half-pyramids, leading up to pyramids and finally dealing with the entire class of stacked directed animals, which will prove the main theorem of this section.
We start with a bijection between strict half-pyramids and classical Motzkin paths, both of which are enumerated by the sequence \oeis{A001006}.

\begin{lemma} \label{lemma:half_pyramids}
  The set of strict half-pyramids of size $n+1$ is in bijection with the set of Motzkin excursions of length $n$.
\end{lemma}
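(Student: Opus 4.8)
The plan is to construct an explicit bijection by reading a strict half-pyramid column by column, from the rightmost (minimal) column to the left, and recording at each step how the dimers in successive columns are stacked relative to one another. Recall that in a strict half-pyramid the unique minimal dimer sits in the rightmost non-empty column, and strictness forbids two dimers from lying directly on top of one another; since the heap is connected, every column from the second onward shares a vertical edge with the column immediately to its right. For two adjacent columns, a dimer in the left column either rests on the floor-continuation of a right-column dimer (so it sits ``one slot lower'' relative to the bottom of the right column), rests exactly level, or is pushed up by a right-column dimer. I would encode the relative vertical shift between the bottom dimer of column $i$ and the bottom dimer of column $i+1$ as a step in $\{-1,0,+1\}$, which is precisely the Motzkin step set.

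Concretely, I would first formalize the ``profile'' of a strict half-pyramid as the sequence of heights of the lowest dimer in each column, normalized so that the rightmost column has height $0$. The key structural claim to verify is that strictness plus connectedness force consecutive heights to differ by at most $1$, that the profile stays nonnegative (the rightmost column being the unique minimal one means no column drops below the floor, i.e. below height $0$), and that it returns to $0$ — wait, it need not return to $0$; rather, I would instead track the \emph{difference} sequence directly and argue it is an arbitrary Motzkin excursion. The cleaner route: associate to a half-pyramid of width $w$ the walk $y_0=0,\ y_1,\dots,y_{w-1}$ where $y_k$ is the relative height of the $(k{+}1)$-st column from the right; show $y_k\ge 0$ for all $k$ (minimality of the rightmost dimer), and that the terminal constraint together with the correct bookkeeping of the ``size $n+1$ versus length $n$'' normalization makes this an excursion of length $n = w-1$. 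One dimer contributes to the count as a unit, so a half-pyramid with $w$ columns has the right size parameter $n+1 = w$ matched against a Motzkin path of length $n = w-1$. I would then exhibit the inverse: given a Motzkin excursion of length $n$, build the half-pyramid one column at a time, placing each new column to the left at the height dictated by the running altitude, which is forced and well-defined precisely because the altitudes are nonnegative and change by at most one.

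For the inverse direction, the main things to check are (i) the reconstructed object is genuinely strict — no dimer ends up directly above another — which follows because a step $0$ places the new dimer level (hence offset by one lattice unit vertically, never stacked), and steps $\pm1$ shift it further; (ii) it is connected and a half-pyramid — connectedness is immediate from building leftward with bounded shifts, and the rightmost column remains the unique minimal dimer exactly because the altitude is $\ge 0$ throughout and, being an excursion, the path structure prevents any other column from reaching the floor prematurely; and (iii) the two maps are mutually inverse, which is a routine unwinding of the definitions once the encoding is pinned down.

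The main obstacle I anticipate is getting the boundary bookkeeping exactly right: pinning down why the relevant walk is an \emph{excursion} (returns to altitude $0$) rather than merely a meander, and confirming the off-by-one between ``size $n+1$'' (counting dimers or columns) and ``length $n$''. A strict half-pyramid can in principle have its leftmost column at positive relative height, which would only give a meander; so either the statement implicitly uses a convention (e.g. size counts something that forces the last column back to the floor, or half-pyramids here are further constrained) or the encoding must be chosen so that the \emph{last} recorded quantity is a return step. I would resolve this by choosing the encoding so that reading the columns left-to-right and taking first differences of the lowest-dimer heights, with the leftmost column normalized to $0$ and the rightmost (minimal) one forced to height $0$ as well, yields a genuine excursion; verifying that this normalization is consistent with strictness and connectedness for \emph{every} half-pyramid is the crux of the argument, and everything else is bookkeeping.
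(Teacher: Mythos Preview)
Your approach has a genuine gap: it rests on the implicit assumption that a strict half-pyramid of size $n+1$ has exactly $n+1$ columns (you write ``a half-pyramid with $w$ columns has the right size parameter $n+1=w$''), and that recording only the height of the \emph{lowest} dimer in each column determines the heap. Neither is true. A strict half-pyramid can have several dimers occupying the same pair of columns at non-consecutive heights. For instance, take the minimal dimer at columns $\{0,1\}$, a dimer at $\{-1,0\}$ on top of it, and a third dimer at $\{0,1\}$ resting on the second. This is a strict half-pyramid of size $3$ whose non-empty columns are $\{-1,0,1\}$, and its ``bottom profile'' coincides with that of the size-$2$ half-pyramid obtained by deleting the third dimer. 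So the column count does not match the size, and your encoding is not injective. The boundary issue you flag (excursion versus meander) is real too, but secondary to this.

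The paper's route is different: it uses a recursive last-dimer decomposition. Removing the minimal dimer from a strict half-pyramid $Q$ leaves either nothing, or a single half-pyramid sitting on its upper-left, or a half-pyramid on the upper-left together with a second half-pyramid stacked above returning to the rightmost column. This ternary grammar $Q \;=\; \bullet \;\big|\; \bullet\, Q \;\big|\; \bullet\, Q\, Q$ matches the Motzkin grammar $M \;=\; \varepsilon \;\big|\; \textbf{E}\, M \;\big|\; \textbf{NE}\, M\, \textbf{SE}\, M$ term by term, with the extra dimer $\bullet$ accounting for the $n+1$ versus $n$ offset. If you want a ``one dimer $\leftrightarrow$ one step'' reading, the right object to traverse is this recursion tree (essentially a depth-first walk over the dimers), not the columns.
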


\begin{proof}
  We already observed in \eqref{eq:Qs} that strict half-pyramids are counted by the Motzkin numbers. Now we will make the combinatorial origin of this connection explicit, by recursively constructing a bijection $\omega$ between these combinatorial classes. The recursive descriptions of both classes are pictured in Figure \ref{fig:half_pyramids}.

  Let $Q$ be a strict half-pyramid. It is either just a minimal dimer, or it consists of multiple dimers. In the first case, we set $\omega(Q)$ to be the empty path. 
  In the latter case, we further distinguish whether there is more than one dimer in the rightmost column of the half-pyramids. If there is just one, then $Q$ is just the product of its minimal dimer and a half-pyramid $Q'$ lying above the minimal dimer on its left side. In this case, we define $\omega(Q) := \textbf{E}\, \omega(Q')$.
  Otherwise, we push the lowest non-minimal dimer of the rightmost column upwards to obtain a factorization into the minimal dimer and two half-pyramids $Q_1$ and $Q_2$. This leads to the recursive rule $\omega(Q):= \textbf{NE}\,\omega(Q_1) \textbf{SE}\, \omega(Q_2)$.
  
  For the inverse direction, let $M$ be a Motzkin excursion. It is either just the empty walk or it consists of at least one step. In the first case, we set $\omega^{-1}(M)$ to be a single dimer.
  In the latter case, we further distinguish by the first step in $M$.
  If $M = \textbf{E}\, M'$, we place a single dimer on the $x$-axis and recursively build $\omega^{-1}(M')$ diagonally right above the minimal dimer.
  If otherwise $M$ starts with a \textbf{NE}-step, we identify the first \textbf{SE}-step that returns to the $x$-axis and partition $M = \textbf{NE}\, M_1 \textbf{SE}\, M_2$. Here we again start by placing a dimer on the $x$-axis and recursively building $\omega^{-1}(M_1)$ diagonally left above it. Once the construction of $\omega^{-1}(M_1)$ is complete, we place $\omega^{-1}(M_2)$ in the same column as the minimal dimer, diagonally right above $\omega^{-1}(M_1)$.
\end{proof}

\begin{figure}[hbt!]
  \centering
  \includegraphics[width = 0.8\linewidth, trim={0 0 0 1cm}, clip]{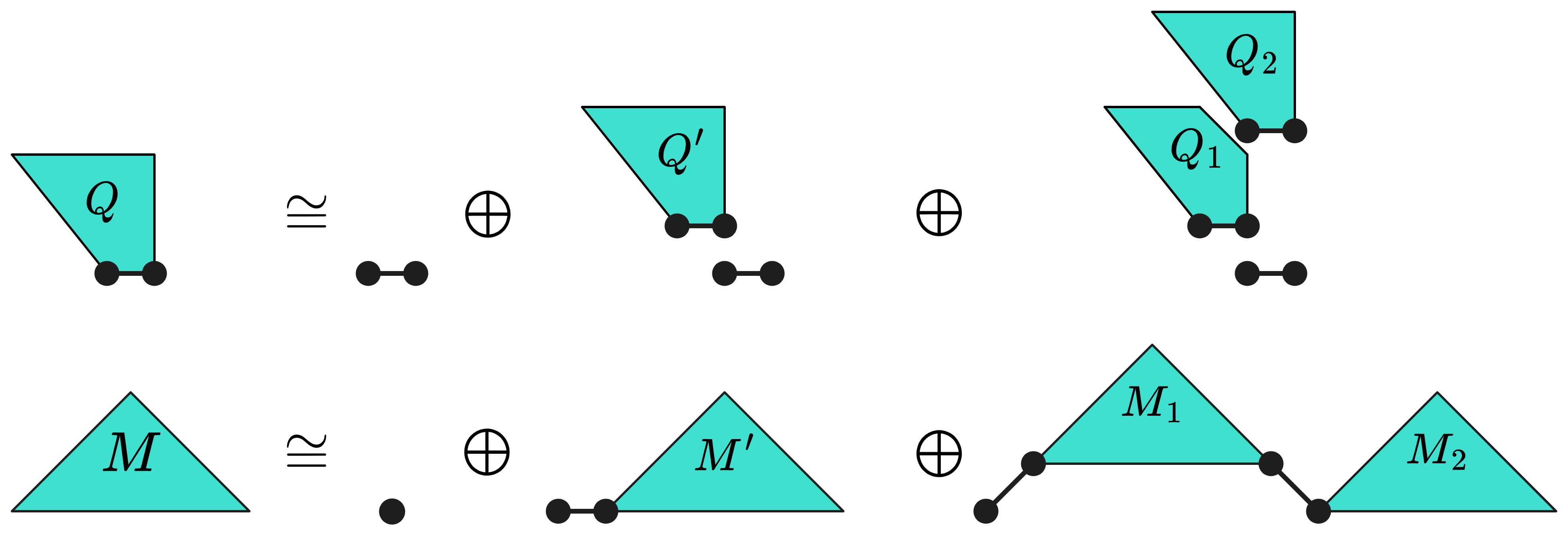}
  \caption{The factorizations of half-pyramids and Motzkin excursions.}
  \label{fig:half_pyramids}
\end{figure}

Building on this result, we present a bijection from strict pyramids to a subclass of Motzkin paths with catastrophes, where catastrophes are only allowed to occur at height $h = 0$.

\begin{lemma}\label{lemma:pyramids}
  The set of strict pyramids of size $n+1$ is in bijection with the set of Motzkin excursions of length $n$ with catastrophes only occurring at height $h = 0$.
\end{lemma}

\begin{proof}
  We already observed in \eqref{eq:Ps} that the generating functions of these two combinatorial classes coincide. Now we present a combinatorial argument for this fact, by extending the bijection $\omega$ from \Cref{lemma:half_pyramids} to the class of all pyramids.
  First, let $P$ be a strict pyramid. It either has zero right width and is thus a half-pyramid, or there exists a dimer exactly one step to the right of the minimal dimer at some height $h > 0$. In the first case, $\omega(P)$ is already well-defined due to Lemma \ref{lemma:half_pyramids}.
  In the second case, we partition $P$ into a lower half-pyramid $Q$ and an upper pyramid $P$, by pushing the lowest non-minimal dimer in the column of the minimal dimer upwards; see Figure~\ref{fig:pyramids}. In this case we apply the recursive rule $\omega(P) = \omega(Q) \ECat\, \omega(P')$, where we write \ECat\, to denote a (horizontal) catastrophe at height zero.

  For the reverse direction, consider a Motzkin excursion $M$ with catastrophes only at height $h = 0$. If it has no \ECat-step, it is simply a regular Motzkin excursion and Lemma \ref{lemma:half_pyramids} applies. In the other case, we split it at the first catastrophe into an initial Motzkin excursion, followed by a catastrophe and a final Motzkin excursion with catastrophes at height $h = 0$. 
  We apply Lemma \ref{lemma:half_pyramids} to the initial Motzkin excursion, which yields a half-pyramid. Then we drop a dimer in the column right of the minimal dimer and recurse on the part after the first catastrophe.
\end{proof}

\begin{figure}[hbt!]
  \centering
  \includegraphics[width = 0.8\linewidth, trim={0 0 0 12cm}, clip]{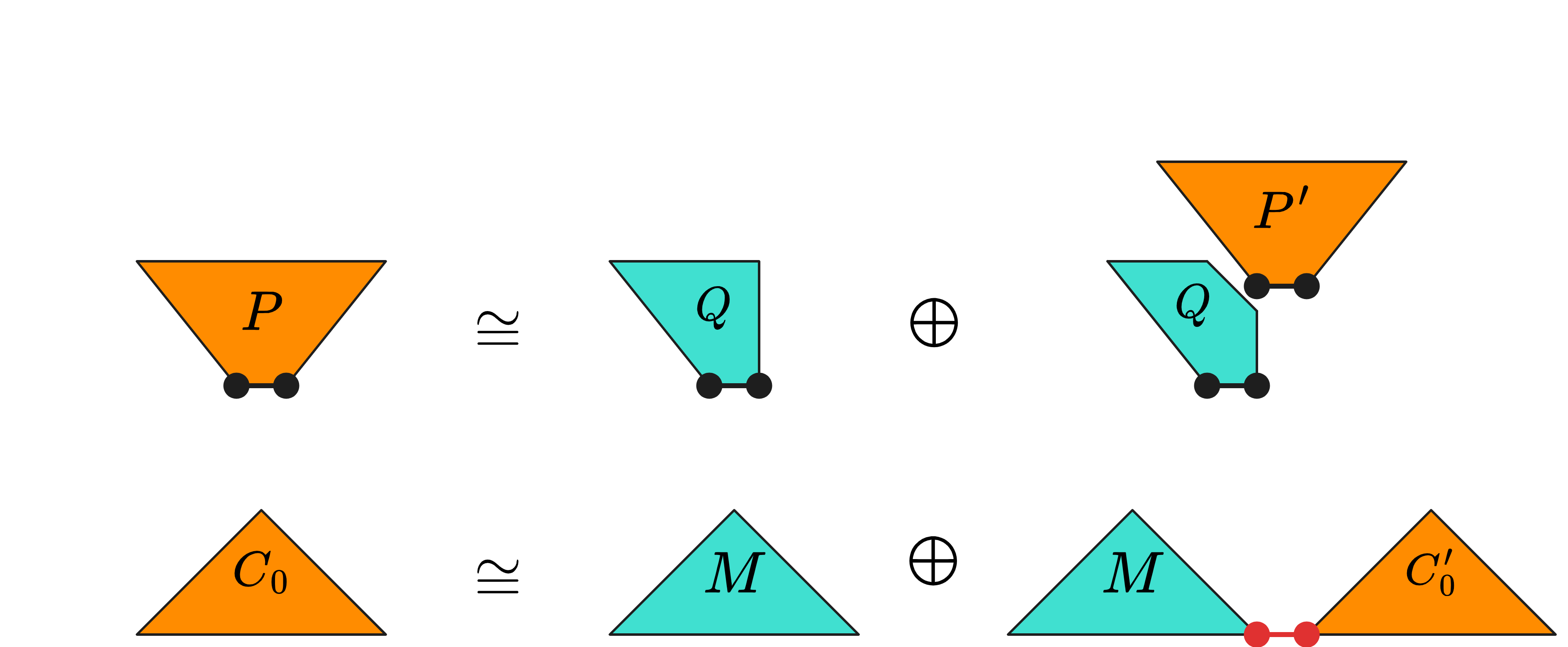}
  \caption[The factorization of Motzkin excursions with only horizontal catastrophes.]{The factorizations of strict pyramids and Motzkin excursions with only horizontal catastrophes.}
  \label{fig:pyramids}
\end{figure}

Finally, we are now able to present our main result.

\Bijection*

\begin{proof}
  Let $A$ be a stacked directed animal on the square lattice and $H = V(A)$ the corresponding connected heap of dimers according to the mapping described in \Cref{subsec:construction}.
  Further, let $P_1,P_2,\dots,P_k$ denote the corresponding pyramidal factors of $H$. We start our translation into lattice paths with the rightmost pyramid $P_k$. If $k = 1$, we simply apply Lemma \ref{lemma:pyramids} to translate $H$ into a Motzkin excursion with catastrophes only occurring at height $h = 0$.
Otherwise, if $k > 1$, after we have drawn $P_k$,
the so far unused catastrophes from heights $h > 0$ will now encode the distance between the two pyramids $P_k$ and $P_{k-1}$, which we define as the horizontal distance between the leftmost dimer of $P_k$ and the minimal dimer of $P_{k-1}$.
Let us denote this distance with $\ell$, which will correspond to the height of the following catastrophe. 
Note that the right width of $P_{k-1}$ must be at least $\ell$, since otherwise the horizontal projections of $P_k$ and $P_{k-1}$ would not intersect. To compensate for the height loss due to the catastrophe, we treat the first $\ell$ half-pyramids $Q_{k-1,1},\dots,Q_{k-1,\ell}$, where the minimal dimer of each $Q_{k-1,i}$ is dropped in the column to the right of the minimal dimer of $Q_{k-1,i-1}$ differently. Let $P_{k-1}^\prime$ denote the pyramid placed to the right of the minimal dimer of $Q_{k-1,\ell}$ and note that its minimal dimer is the first dimer whose horizontal projection intersects with the horizontal projection of $P_k$, thus connecting the pyramids.
Now we need to deviate from the construction presented in Lemma \ref{lemma:pyramids}, as we need to introduce $\ell$ additional \textbf{NE}-steps in order to offset the height lost with the new catastrophe. 
Hence, the start of each of the half-pyramids $Q_i$ will be marked with a \textbf{NE}-step instead of with a horizontal catastrophe, like in Lemma \ref{lemma:pyramids}. 
In particular, this means that the start of a new pyramid is always marked with an additional \textbf{NE}-step. 
This additional step is important, as otherwise each pyramid consisting of $m$ dimers would be translated to a lattice path of length $m - 1$, and the final length of the lattice path would depend on the number of pyramids.
The half-pyramids themselves are then simply translated according to the recursion rules from Lemma \ref{lemma:half_pyramids}. 
Note that these rules remain legitimate on altitude $i > 0$, as they do not involve horizontal catastrophes, which may only happen at height $0$.
Thus, the last half-pyramid $Q_{k-1,\ell}$ before $P_{k-1}'$ will be represented by a Motzkin excursion starting and ending at height $\ell$. 
After that, a catastrophe from height $\ell$ will usher in the start of the image of $P_{k-1}'$, which can now again be drawn according to the rules of Lemma \ref{lemma:pyramids}, as it no longer starts at a positive height.
This procedure, illustrated in Figure \ref{fig:stacked_pyramids}, can now be iterated over all pyramidal factors of $H$ to obtain the final lattice path image of $H$.

For the inverse mapping, let $M$ be a Motzkin excursion with catastrophes. If $M$ does not contain any non-horizontal catastrophes, we may simply apply Lemma \ref{lemma:pyramids} to translate $M$ to a single pyramid. Otherwise, we split $M$ at every non-horizontal catastrophe. This yields a set of excursions $E_1,E_2,\dots,E_k$, with $k > 1$, each having exactly one non-horizontal catastrophe at their very end.
Consider the first of these excursions $E_1$, which will correspond to the rightmost pyramid $P_k$ of $H$ and the start of the next pyramid $P_{k-1}$. To recover $P_k$, it suffices to apply the procedure described in Lemma \ref{lemma:pyramids}.
However, this alone does not yet tell us, at which point we need to start drawing $P_{k-1}$. For that we need to look ahead to the non-horizontal catastrophe, which signals the end of $E_1$. The start of $P_{k-1}$ then corresponds to the last time $E_1$ leaves altitude zero before its final catastrophe, which can be intuitively described as the first \textbf{NE}-step visible from the viewpoint of the next catastrophe. The next question we need to answer is where to place the minimal dimer of $P_{k-1}$. 
For this we start at the horizontal projection of the leftmost dimer of $P_k$ and move $\ell + 1$ units to the left, where $\ell$ is the height of the catastrophe at the end of $E_1$. This is where we place the minimal dimer of $P_{k-1}$ and start building the first half-pyramid $Q_{k-1,1}$.
Similarly, the last time $E_1$ leaves altitude one marks the start of the next half-pyramid $Q_{k-1,i+1}$. The minimal dimer of $Q_{k-1,i+1}$ needs to be placed diagonally right above the highest dimer in the rightmost column of $Q_{k-1,i}$.
Now we can iterate this process until we hit the catastrophe, which marks the start of the pyramid $P_{k-1}'$. 
Now the process repeats, as we draw $P_{k-1}'$ until we reach the first \textbf{NE}-step visible from the next non-horizontal catastrophe; see Figure \ref{fig:stacked_directed_animals_example} for an example of this correspondence.
\end{proof}

\section{Improved bounds on the width of stacked pyramids}
\label{sec:parameters}

One advantage gained by viewing stacked directed animals as lattice paths comes with the reinterpretation of parameters in the language of lattice paths. In particular, studying the asymptotic behaviour of the cumulative size of all catastrophes improves the previous best asymptotic lower bound of $\frac{3}{28}n$ \cite[Proposition 2]{LatticeAnimals} by a factor of three.

\begin{lemma}
  \label{lem:cum_size}
    Define the random variable $S_n$ as the cumulative size of all catastrophes of a Motzkin excursion with catastrophes of length $n$ drawn uniformly at random.
    Then, $\lim_{ n \to \infty } \mathbb{E}[S_n] = \frac{3}{14}n$.
\end{lemma}

\begin{proof}
  A key result \cite{BasicLatticePaths} in the enumeration of lattice paths is the fact that the principal small root $u_1(z)$ and the principal large root $v_1(z)$ of the kernel equation $1 - z \cdot P(u) = 0$ are conjugated to each other at their dominant singularity $\rho = 1 / P(\tau)$, where $\tau$ is the minimal positive real solution to $P'(\tau) = 0$. This singularity also plays a crucial role in determining the limit laws of various parameters of directed lattice paths.
  Further, we also need to consider the dominant singularity of the generating function
  \[
    D(z) = \frac{1}{1 - Q(z,1)} = \frac{1}{1 - z \cdot M(z,1)} = \frac{1 - u_1(z)}{1 - z \cdot P(1)}
  \]
  of Motzkin excursions ending with a catastrophe.
  Its dominant singularity, which we denote by $\rho_0$ is given by the minimum of the minimal real positive solution of $1 - Q(z,1) = 0$ and $\rho$.
  For the jump polynomial of Motzkin paths $P(u) = 1/u + 1 + u$, we get that $\rho = 1$ and $\rho_0 = 2/7$.
  For the average cumulative size of all catastrophes we apply \cite[Theorem~4.18]{Catastrophes}, which tells us that the normalized random variable $\frac{S_n - \mu n}{\sigma \cdot \sqrt{ n }}$ converges in law to a standard Gaussian variable $X \sim \mathcal{N}(0,1)$.
  The parameter $\mu$ is given by $\mu = \frac{Q_u(\rho_0, 1)}{\rho_0 \cdot Q_z(\rho_0, 1)}$, which works out to be $3/14$.
  Thus, we get $\mathbb{E}[S_n] = \frac{3}{14} n$.
\end{proof}

In order to obtain a non-trivial upper bound on the average width of a stacked pyramid, we look at the asymptotic number of \textbf{SE} steps in a Motzkin excursion with catastrophes.

\begin{lemma}
  \label{lem:no-SE}
  Let $X_n$ denote the random variable counting the number of \textbf{SE} steps in a Motzkin excursion of length $n$ chosen uniformly at random.
  Then, $\mathbb{E}[X_n] \xrightarrow[]{{n \to \infty}} \frac{n}{7}$.
\end{lemma}

\begin{proof}
  We mark the \textbf{SE} steps in a Motzkin path by introducing the bivariate jump polynomial $P(u,x) := \frac{x}{u} + 1 + u$. 
  This leads to the bivariate generating function
  \[
  E_\mathcal{M}(z,x) = \frac{u_1(z,x) \left(-1+u_1(z,x) \right)}{z \left(-1+\left(x -u_1(z,x) +3\right) z \right) x},
  \]
  of Motzkin excursion with catastrophes, where $u_1(z,x)$ denotes the small branch of the kernel equation $1 - z \cdot P(u,x) = 0$. The variable $x$ counts the number of \textbf{SE} steps.
   Then, 
  \[
    \mathbb{P}[X_n = k] = \frac{[z^n x^k] E_\mathcal{M}(z,x)}{[z^n] E_\mathcal{M}(z,1)} \quad \text{and} \quad \mathbb{E}[X_n] = [z^n] \left(\frac{\frac{\partial}{\partial x} E_\mathcal{M}(z,1)}{ E_\mathcal{M}(z,1)}\right).
  \]
  With the help of a computer algebra system one obtains that
  \[
    [z^n] \frac{\partial}{\partial x} E_\mathcal{M}(z,1) = \frac{3n}{56} \cdot \left( \frac{7}{2}\right)^n \cdot \left(1 + \mathcal{O}\left(\frac{1}{n}\right)\right)  \quad \text{and} \quad [z^n] E_\mathcal{M}(z,1) = \frac{3}{8} \cdot \left( \frac{7}{2}\right)^n \cdot \left(1 + \mathcal{O}\left(\frac{1}{n}\right)\right)
  \]
  and thus $\mathbb{E}[X_n] \xrightarrow[]{{n \to \infty}} \frac{n}{7}$.
\end{proof}

\Width*

\begin{proof}
  For the lower bound on the average width we recall that height of a non-horizontal catastrophe corresponds to the horizontal distance between the minimal dimer and the left-most dimer of two adjacent pyramids. This in turn is a lower bound on the distance between the minimal dimers of two adjacent pyramids.
  From \Cref{lem:cum_size} we know that the average cumulative size of a Motzkin excursion with catastrophes of size $n$ converges to $\frac{3}{14} \cdot n$.
  Further, the number of minimal dimers of a stacked pyramid is asymptotically equal to $\frac{3}{28} \cdot n$ according to \cite[Proposition 2]{LatticeAnimals}.
  Adding these numbers up, we arrive at our lower bound of $\frac{9}{28} \cdot n$.

  For the upper bound, we observe that a \textbf{SE} step in a Motzkin excursion with catastrophes never increases the width of the corresponding pyramid. This is because if we identify steps with dimers, the dimer corresponding to a \textbf{SE} step is always placed in a column that already contains at least one other dimer.
  Thus, \Cref{lem:no-SE} shows that the asymptotic width of stacked pyramids is upper bounded by $\frac{6}{7} \cdot n$.
\end{proof}

%%%%%%%%%%%%%%%%%%%%%%%%%%%%%%%%%%%%%%%%%%%%%%
\section*{Acknowledgments}

Florian Schager was supported by the Austrian Science Fund (FWF): P~36280. \\
Michael Wallner was supported by the Austrian Science Fund (FWF): P~34142. 

%%%%%%%%%%%%%%%%%%%%%%%%%%%%%%%%%%%%%%%%%%%%
\bibliographystyle{eptcs}
\bibliography{literature}    

\end{document}